\DeclareFontFamily{U}{mathx}{\hyphenchar\font45}
\DeclareFontShape{U}{mathx}{m}{n}{
      <5> <6> <7> <8> <9> <10>
      <10.95> <12> <14.4> <17.28> <20.74> <24.88>
      mathx10
      }{}
\DeclareSymbolFont{mathx}{U}{mathx}{m}{n}
\DeclareMathAccent{\widecheck}{0}{mathx}{"71}
\DeclareMathAccent{\wideparen}{0}{mathx}{"75}
\providecommand{\U}[1]{\protect\rule{.1in}{.1in}}
\newtheorem{theorem}{Theorem}[section]
\newtheorem{lemma}[theorem]{Lemma}
\newtheorem{proposition}[theorem]{Proposition}
\theoremstyle{definition}
\newtheorem{definition}[theorem]{Definition}
\newtheorem{remark}[theorem]{Remark}
\numberwithin{equation}{section}
\def\cB{\mathcal B}
\def\cI{\mathcal I}
\def\cM{\mathcal M}
\begin{document}

\title{Some remarks on $L^1$ embeddings in the subelliptic setting}
\author[1]{Steven G. Krantz\thanks{sk@math.wustl.edu}}
\affil[1]{Department of Mathematics \\ 
Washington University in St. Louis \\ 
St. Louis, Missouri 63130
}
\author[2]{Marco M. Peloso\thanks{marco.peloso@unimi.it}}
\affil[2]{Dipartimento di Matematica ``F. Enriques''\\
Universit\`a degli Studi di Milano\\
Via C. Saldini 50\\
I-20133 Milano}
\author[3]{Daniel Spector\thanks{dspector@math.nctu.edu.tw}}
\affil[3]{Department of Applied Mathematics\\
 National Chiao Tung University\\
 Hsinchu, Taiwan
}

\maketitle

\begin{abstract}
In this paper we establish an optimal Lorentz estimate for the Riesz potential in the $L^1$ regime in the setting of a stratified group $G$:  Let $Q\geq 2$ be 
the homogeneous dimension of $G$ and $\cI_\alpha$ denote the Riesz potential of order $\alpha$ on $G$.  Then,
for every $\alpha \in (0,Q)$, there
exists a constant $C=C(\alpha,Q)>0$ such that
\begin{align}
\| \cI_\alpha f \|_{L^{Q/(Q-\alpha),1}(G)} \leq C\| X\cI_1 f \|_{L^1(G)}
\end{align}
for distributions $f$ such that $X \cI_1 f \in L^1(G)$, where $X$ denotes the horizontal gradient.
\end{abstract}

\section{Introduction}
A now classical result of S. Sobolev \cite{sobolev} concerns the mapping property of the Riesz potentials on Euclidean space:  For every $\alpha \in (0,d)$ and $p \in (1,d/\alpha)$ there exists a constant 
$C=C(p,\alpha,d)>0$ such that
\begin{align}
\|I_\alpha f\|_{L^{dp/(d-\alpha p)}(\mathbb{R}^d)} \leq C 
\|f\|_{L^p(\mathbb{R}^d)} \label{sobolevineq}
\end{align} 
for all $f \in L^p (\mathbb{R}^d)$.  Here we use $I_\alpha f$ to denote the Riesz potential of order $\alpha$ of the function $f$, defined in Euclidean space by
\begin{align*}
I_\alpha f(x) := \frac{1}{\gamma(\alpha)} \int_{\mathbb{R}^d} \frac{f(y)}{|x-y|^{d-\alpha}}\;dy
\end{align*}
for an appropriate normalization constant $\gamma(\alpha)$.  

While this inequality does not hold in the case $p=1$, there are several possible replacements.  For example, with the assumption $f \in L^1(\mathbb{R}^d)$, one has the weak-type estimate
\begin{align*}
\sup_{t>0} t |\{ |I_\alpha f| >t \}|^{(d-\alpha)/d} \leq C' \|f\|_{L^1(\mathbb{R}^d)},
\end{align*}
which has been pioneered by A. Zygmund in his 1956 paper \cite{Zygmund}.  In order to recover an analogous conclusion to the inequality \eqref{sobolevineq}, one can strengthen the hypothesis, as for example 
in the work of E. Stein and G. Weiss \cite{Stein-Weiss} which implies the inequality
\begin{align*}
\|I_\alpha f\|_{L^{d/(d-\alpha)}(\mathbb{R}^d)} \leq C'' \left( \|f\|_{L^1(\mathbb{R}^d)} + \|\nabla I_1f\|_{L^1(\mathbb{R}^d;\mathbb{R}^d)}\right).
\end{align*}

More recently this estimate has been refined by the third author, in collaboration with Armin Schikorra and Jean Van Schaftingen, in \cite{SSVS} where it was proved that if  $d\geq 2$, then for every $\alpha\in(0,d)$,  there exists a constant $C=C(\alpha,d)>0$ 
such that
\begin{align}
\|I_\alpha f\|_{L^{d/(d-\alpha)}(\mathbb{R}^d)} \leq C
\|\nabla I_1f\|_{L^1(\mathbb{R}^d;\mathbb{R}^d)} \label{L1type}
\end{align}
for all $f \in C^\infty_c(\mathbb{R}^d)$ such that $\nabla I_1f
\in L^1(\mathbb{R}^d;\mathbb{R}^d)$.

An analogue of the inequality \eqref{sobolevineq} has been established for the Heisenberg group by G. Folland and E. Stein \cite{Folland-Stein} and in the more general setting of stratified groups by G. Folland \cite{Folland}, while the Hardy space extension to $p=1$ has been proved in the former setting by the first author in \cite{Krantz} and in the latter by G. Folland and E. Stein in \cite{Folland-Stein-2}.  

One of the main goals of this paper is to extend the inequality \eqref{L1type} to this more general setting.  However, a second aspect of our paper is to prove optimal inequalities on the finer Lorentz scale.  Here let us recall that R. O'Neil's work on convolution estimates in Lorentz spaces implies that one has the following sharpening of the inequality \eqref{sobolevineq} (see \cite{oneil}):   For every $\alpha \in (0,d)$ and $p \in (1,d/\alpha)$ there exists a constant 
$C=C(p,\alpha,d)>0$ such that
\begin{align}\label{oneil}
\|I_\alpha f \|_{L^{q,p}(\mathbb{R}^d)} \leq C \|f\|_{L^p(\mathbb{R}^d)}
\end{align}
for all $f \in L^p (\mathbb{R}^d)$, where 
\begin{align*}
\frac{1}{q} = \frac{1}{p} -\frac{\alpha}{d}.
\end{align*}
The endpoint $p=1$ is forbidden in R. O'Neil's paper \cite{oneil}, and it was work by the third author in \cite{Spector} which obtained the sharpening of the inequality \eqref{L1type} on this Lorentz scale:  Let $d\geq 2$.  For every $\alpha \in (0,d)$, there exists a constant $C=C(\alpha,d)>0$ such that
\begin{align}\label{l1typeestimate}
\| I_\alpha f \|_{L^{d/(d-\alpha),1}(\mathbb{R}^d)} \leq C\| \nabla I_1 f \|_{L^1(\mathbb{R}^d;\mathbb{R}^d)}
\end{align}
for distributions $f$ such that $\nabla I_1f \in L^1(\mathbb{R}^d;\mathbb{R}^d)$.  

It is an exercise in L. Grafakos's book \cite{grafakos} to show that the inequality \eqref{oneil} extends to the setting of a stratified Lie group, as R. O'Neil's paper shows that such an embedding only relies on the structure of the measure spaces and the exponents.  It is natural to conjecture that an analogue to the inequality \eqref{l1typeestimate} holds 
in this setting as well.  Indeed, the main result of our paper is the following.  As it is well known (see Section \ref{preliminaries}), the Lie
algebra $\mathfrak{g}$ of $G$ is generated by the vector fields of the
first stratus $V_1$, and we call them {\em horizontal}.  Having fixed a basis $\{X_1,\dots,X_n\}$ of $V_1$, we denote by
$Xf=(X_1f,\dots,X_nf)$ the horizontal gradient of a function
(distribution) $f$. 
 In this setting there is also a natural analogue of the Riesz potentials, that we denote by $\cI_\alpha$, for $\alpha >0$; see again Section \ref{preliminaries} for precise definitions. \begin{theorem}\label{thm1}
Let $Q\geq 2$ be the homogeneous dimension of $G$.  For every $\alpha \in (0,Q)$, there exists a constant $C=C(\alpha,Q)>0$ such that
\begin{align}
\|   \cI_\alpha   f \|_{L^{Q/(Q-\alpha),1}(G)} 
\leq C\| X  \cI_1 f   \|_{L^1(G)} ,
\end{align}
for distributions $f$ such that $X \cI_1 f \in L^1(G)$, and $X$   denotes the horizontal gradient. 
\end{theorem}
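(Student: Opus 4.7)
The plan is to reduce the claimed endpoint Lorentz estimate to a uniform bound on the $L^{Q/(Q-\alpha),1}(G)$ norm of $\cI_\alpha$ applied to an atom, and then to invoke the atomic decomposition of the stratified Hardy space $H^1(G)$. The starting observation is the Folland--Stein Riesz transform characterization of $H^1(G)$ (cf.\ \cite{Folland-Stein-2}): a tempered distribution $f$ lies in $H^1(G)$ if and only if each horizontal Riesz transform $X_j \cI_1 f$ lies in $L^1(G)$, with
\[
\|f\|_{H^1(G)} \asymp \|X \cI_1 f\|_{L^1(G)}.
\]
This identifies the hypothesis of the theorem with $f \in H^1(G)$ and allows us to use the atomic decomposition $f = \sum_k \lambda_k a_k$, where each $a_k$ is supported in a ball $B_k = B(x_k, r_k)$ with $\|a_k\|_{L^\infty} \leq |B_k|^{-1}$ and $\int a_k = 0$, and $\sum_k |\lambda_k| \lesssim \|f\|_{H^1(G)}$.

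The heart of the proof is the uniform atomic estimate
\[
\|\cI_\alpha a\|_{L^{Q/(Q-\alpha),1}(G)} \leq C,
\]
with $C$ independent of the atom. By left-invariance we may assume $a$ is supported in $B(e,r)$. Writing $\cI_\alpha a(x) = \int K_\alpha(y^{-1}x) a(y)\, dy$, where $K_\alpha$ is the Folland convolution kernel of $\cI_\alpha$, homogeneous of degree $\alpha - Q$ and smooth off the origin in the stratified group sense, the direct estimate on $B(e, 2r)$ yields
\[
|\cI_\alpha a(x)| \lesssim r^{-Q} \int_{B(e, r)} |y^{-1}x|_G^{\alpha - Q}\, dy \lesssim r^{\alpha - Q},
\]
while on $B(e, 2r)^c$ the cancellation $\int a = 0$ together with the stratified-group H\"older bound on $K_\alpha$ gives
\[
|\cI_\alpha a(x)| \lesssim r\,|x|_G^{\alpha - Q - 1}.
\]
Setting $h(x) := r^{\alpha - Q} \mathbf{1}_{B(e, 2r)}(x) + r\,|x|_G^{\alpha - Q - 1} \mathbf{1}_{B(e, 2r)^c}(x)$ and computing $\|h\|_{L^{p,1}(G)} = \int_0^\infty |\{h > t\}|^{1/p}\, dt$ with $p = Q/(Q - \alpha)$, each of the two resulting regions of $t$ contributes a term whose exponents of $r$ exactly cancel, yielding the desired scale-invariant bound.

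Finally, since $p = Q/(Q - \alpha) > 1$, the Lorentz quasi-norm $\|\cdot\|_{L^{p,1}}$ is equivalent to a genuine norm, and summation of the atomic estimate gives
\[
\|\cI_\alpha f\|_{L^{Q/(Q-\alpha),1}(G)} \lesssim \sum_k |\lambda_k|\, \|\cI_\alpha a_k\|_{L^{Q/(Q-\alpha),1}(G)} \lesssim \sum_k |\lambda_k| \lesssim \|f\|_{H^1(G)} \lesssim \|X \cI_1 f\|_{L^1(G)},
\]
which is the claimed inequality. The main obstacle is the atomic estimate itself: the classical $H^1 \to L^p$ theory only delivers a uniform $L^{Q/(Q-\alpha)}$ bound on atoms, whereas the sharper $L^{Q/(Q-\alpha),1}$ bound requires fully exploiting both the vanishing-moment cancellation and the H\"older regularity of $K_\alpha$ in the stratified-group metric. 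A secondary subtlety is the justification that $\cI_\alpha f = \sum_k \lambda_k\, \cI_\alpha a_k$ converges in $L^{p,1}(G)$, which follows from the convergence of the atomic series in $H^1(G)$ and the boundedness of $\cI_\alpha \colon H^1(G) \to L^{p,1}(G)$ just established on atoms.
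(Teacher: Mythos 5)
Your argument has a genuine gap at its very first step, and it is precisely the step that contains the whole difficulty of the theorem. You assert that the hypothesis $X\cI_1 f\in L^1(G)$ is equivalent to $f\in H^1(G)$ with $\|f\|_{H^1(G)}\asymp\|X\cI_1 f\|_{L^1(G)}$. The Riesz-transform characterization of Folland--Stein is not this: it says that for $f\in L^1(G)$ one has $f\in H^1(G)$ if and only if $X_j\cI_1 f\in L^1(G)$ for every $j$, with $\|f\|_{H^1}\asymp\|f\|_{L^1}+\|X\cI_1 f\|_{L^1}$. The inequality you actually need, $\|f\|_{L^1(G)}\lesssim\|X\cI_1 f\|_{L^1(G)}$, amounts to an $L^1\to L^1$ bound for the singular integrals inverting the Riesz transforms and is not available; moreover the theorem is stated for distributions $f$ that need not belong to $L^1(G)$ at all. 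The entire point of the Euclidean antecedents \cite{SSVS}, \cite{Spector} --- and of Theorem \ref{thm1} --- is to remove the term $\|f\|_{L^1}$ from the classical Stein--Weiss-type bound; if your reduction were valid, \eqref{L1type} would already follow from the classical $H^1\to L^{d/(d-\alpha)}$ mapping property of $I_\alpha$. What your argument can deliver (once the atomic computation is carried out with Folland's homogeneous kernel of type $\alpha$, and once the passage from uniform bounds on atoms to boundedness of the operator on all of $H^1$ is justified non-circularly --- as written, your last paragraph assumes the boundedness it is trying to prove) is the weaker estimate $\|\cI_\alpha f\|_{L^{Q/(Q-\alpha),1}(G)}\le C\bigl(\|f\|_{L^1(G)}+\|X\cI_1 f\|_{L^1(G)}\bigr)$. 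The atomic Lorentz computation itself is fine: both the local piece and the tail piece scale to a constant independent of $r$, exactly as you indicate.

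The paper's proof is structured precisely so as to avoid $\|f\|_{L^1}$. With $u=\cI_1 f$, the pointwise interpolation inequality of Lemma \ref{interpolation} bounds $|\cI_\alpha X_j u|$ by a product of powers of two maximal functions, one of $X_j u$ and one of $u$; H\"older's inequality on the Lorentz scale together with the weak-$(1,1)$ bound for the heat maximal function and the strong bounds for $\mathcal{M}_1$ reduce matters to $\|X_j u\|_{L^1}^{1-\alpha}\,|||u|||_{L^{r\alpha,\alpha}}^{\alpha}$; the factor involving $u$ is then reabsorbed for $u=\chi_E$ by the isoperimetric inequality $|E|^{1-1/Q}\lesssim |X\chi_E|(G)$, and the general case follows from the coarea formula. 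Nowhere does the $L^1$ norm of $f$ or of $u$ appear. To salvage your approach you would need a decomposition adapted to the quantity $\|X\cI_1 f\|_{L^1}$ rather than to $\|f\|_{H^1}$; the level-set decomposition via the coarea formula in the paper is exactly such a substitute for the atomic decomposition.
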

The work of S. Chanillo and J. Van Schaftingen \cite{CVS} obtains results in a similar spirit, for the case of vector-valued differential operators more general than the horizontal gradient, though their results do not obtain the optimal Lorentz space.  In fact, very few optimal Lorentz estimates are known, other than the previously mentioned work of the third author and a recent work of the third author and J. Van Schaftingen \cite{Spector-VanSchaftingen-2018}.

The method of proof has been pioneered in \cite{Spector}, and in particular can be conveniently broken up into five components.
\begin{enumerate}
\item[1.]Establish a pointwise inequality in terms of maximal functions.
\item[2.] Use H\"older's inequality on the Lorentz scale.
\item[3.] Use the boundedness properties of the maximal function.
\item[4.]  Apply to set functions $u = \chi_E$ and reabsorb the bound for the term involving $\chi_E$ by the isoperimetric inequality.
\item[5.]  Use the coarea formula to conclude the general case.
\end{enumerate}

The verification of such a procedure in this more general framework is interesting, as with the abstract definition of the Riesz potential we are led to more natural quantities to estimate.  In place of the Hardy--Littlewood maximal function we work with some maximal functions
associated to the heat kernel $p_t$ for the subLaplacian.  
Thus, for Item 1 in place of the pointwise maximal function
estimate of Maz'ya and Shaposhnikova \cite{Mazya-Shaposhnikova}, we
obtain and utilize the inequality
\begin{align}
|   \cI_\alpha   
X_ju| \leq C \left(\sup_{t>0} | X_j u \ast p_t(x)|\right)^{1-\alpha}
\left(\sup_{t>0} |u \ast \sqrt{t}  (X_j
p_t)\widecheck{\;}  | \right)^{\alpha},
\end{align}
see Lemma \ref{interpolation} below in Section \ref{interpolationsection}.  Item 2 then follows as before, while for Item 3 we now require estimates for these maximal functions.  However, in the setting of stratified groups this is now well-understood:  Cowling, Gaudry, Giulini, and Mauceri have shown in
\cite{CGGM} that one has a weak-$(1,1)$ estimate for the
map
\begin{align*} 
 \cM: f \mapsto \sup_{t>0} | f \ast p_t |(x),
\end{align*}
while the $L^p$-boundedness of
\begin{align*}
\cM_1: f \mapsto \sup_{t>0} | f \ast t^{1/2} \sqrt{t}  (X_j
p_t)\widecheck{\;}   |(x)
\end{align*}
follows from either the theory of grand maximal functions in \cite{Folland-Stein-2} or the same arguments as in \cite{CGGM}, using the Gaussian estimates for the heat kernel $p_t$ -- see e.g. Appendix 1 in \cite{CRT-N} and references therein.  The Lorentz space result thus follows by interpolation.  Finally for Items 4 and 5 the structure present in the stratified Lie group setting is sufficient to argue analogously to the Euclidean case.

The plan of the papers is as follows.  In Section \ref{preliminaries} we recall the requisite preliminaries concerning stratified groups, Lorentz spaces, and functions of bounded variation.  In Section \ref{interpolationsection} we prove a pointwise interpolation inequality for the Riesz potentials in this more general setting.  Finally in Section \ref{mainresult} we prove Theorem \ref{thm1}.

\section{Notation and Preliminaries}\label{preliminaries}

We now define the notion of a stratified group $G$ and the Riesz potential in this setting, as introduced by G. Folland in \cite{Folland}.  

A stratified group $G$ is a nilpotent, simply connected Lie group whose Lie algebra $\mathfrak{g}$ admits
a vector space decomposition such that
\begin{align*}
\mathfrak{g} = V_1\oplus V_2\oplus \cdots \oplus V_m
\end{align*}
such that
\begin{align*}
[V_1,V_k] &= V_{k+1}  \quad 1\leq k<m, \\
[V_1,V_m] &= \{0\}.
\end{align*}
Here we recall that in this setting $\mathfrak{g}$ is a real finite dimensional Lie algebra equipped with a family of dilations $\gamma_r=\operatorname*{exp}(A\ln(r))$ with $A$ a diagonalizable linear transformation of $\mathfrak{g}$ with smallest eigenvalue $1$.  The number $Q= \hbox{\rm trace}(A)$ is the homogeneous dimension with respect to this family of dilations.

For such stratified groups, one can choose a basis
$\{X_1,X_2,\ldots,X_n\}$ of $V_1$ that we fix in the sequel, and  
denote by $Xf = (X_1f,X_2f,\ldots,X_n f)$ the horizontal gradient.  We then define the subLaplacian as the operator 
\begin{align*}
\mathcal{J} := -\sum_{i=1}^n X_i^2 \,.
\end{align*}
With such a definition, $\mathcal{J}$
is hypoelliptic (a result due to H\"ormander which is presented as Proposition (0.1) in \cite{Folland}, p.~161).  In this setting one can define the Riesz kernel as
(see p.~185 in \cite{Folland})
\begin{align*}
 \cI_\alpha 
(x):= \frac{1}{\Gamma(\alpha/2)} \int_0^\infty t^{\alpha/2-1} p_t 
(x) \;dt
\end{align*}
where $p_t(x)$ is the fundamental solution of $\frac{\partial p_t}{\partial t} + \mathcal{J}$, i.e.
\begin{align*}
\frac{\partial p_t}{\partial t} + \mathcal{J}p_t &= 0 \quad \text{ in } G \times \mathbb{R}^+
\end{align*}
and $p_0(x)=\delta_x$, suitably interpreted.  In turn, the Riesz
potential is defined via the convolution on the right 
\begin{align*}
 \cI_\alpha f (x) := f \ast \cI_\alpha(x) = \int f(xy^{-1})
\cI_\alpha(y)\;dy. 
\end{align*}
Here and in the sequel we use $dy$ (or else $dx,dz$, etc.) to denote the bi-invariant Haar measure on $G$, which is the lift of the Lebesgue measure on $\mathfrak{g}$ via the exponential map. Putting these several facts together we find a useful expression for the Riesz potential in 
\begin{align*}
\cI_\alpha 
f = \frac{1}{\Gamma(\alpha/2)} \int_0^\infty t^{\alpha/2-1} f \ast p_t\;dt.
\end{align*}
\begin{remark}
When $p_t$ is the standard heat kernel on Euclidean space, one has $\cI_\alpha 
f =I_\alpha f$, as this formula can be shown to be equivalent to that utilized in the introduction.
\end{remark}

Let us now recall some results concerning the Lorentz spaces $L^{q,r}(G)$.  We follow the convention of R. O'Neil in \cite{oneil}, who proves various results for these spaces under the assumption that $(G,dx)$ is a measure space.  We begin with some definitions related to the non-increasing rearrangement of a function.
\begin{definition}
For $f$ a measurable function on $G$, we define
\begin{align*}
m(f,y):= |\{ |f|>y\}|.
\end{align*} 
As this is a non-increasing function of $y$, it admits a left-continuous inverse, called the non-negative rearrangement of $f$, and which we denote $f^*(x)$.  Further, for $x>0$ we define
\begin{align*}
f^{**}(x):= \frac{1}{x}\int_0^x f^*(t)\;dt.
\end{align*}
\end{definition}
With these basic results, we can now give a definition of the Lorentz spaces $L^{q,r}(G)$.  
\begin{definition}
Let $1<q<+\infty$ and $1\leq r<+\infty$.  We define
\begin{align*}
\|f\|_{L^{q,r}(G)} := \left( \int_0^\infty \left[t^{1/q} f^{**}(t)\right]^r\frac{dt}{t}\right)^{1/r},
\end{align*}
and for $1\leq q \leq+\infty$ and $r=+\infty$
\begin{align*}
\|f\|_{L^{q,\infty}(G)} := \sup_{t>0} t^{1/q} f^{**}(t).
\end{align*}
\end{definition}
For these spaces, one has a duality between $L^{q,r}(G)$ and $L^{q',r'}(G)$ for $1<q<+\infty$ and $1\leq r < +\infty$, where
\begin{align*}
\frac{1}{q}+\frac{1}{q'}&=1\\
\frac{1}{r}+\frac{1}{r'}&=1.
\end{align*}
This implies that one has
\begin{align*}
\| f\|_{L^{q,r}(G)} = \sup \left\{ \left| \int_{G} fg \;dx \right| : g \in L^{q',r'}(G) \ , \ \|g \|_{L^{q',r'}(G)}\leq 1\right\},
\end{align*}
see, for example, Theorem 1.4.17 on p.~52 of \cite{grafakos}.

Let us observe that, with this definition,
\begin{align*}
\|f\|_{L^{1,\infty}(G)} &= \|f\|_{L^1(G)} \\
\|f\|_{L^{\infty,\infty}(G)} &= \|f\|_{L^\infty(G)},
\end{align*}
where the spaces $L^1(G)$ and $L^\infty(G)$ are intended in the usual sense.  It will be important for our purposes to have different endpoints than these, which is only possible through the introduction of a different object.  In particular, 
for $1<q<+\infty$, one has a quasi-norm on the Lorentz spaces $L^{q,r}(G)$ that is equivalent to the norm we have defined.  What is more, this quasi-norm can be used to define the Lorentz spaces without such restrictions on $q$ and $r$.  Therefore let us introduce the following definition.
\begin{definition}
Let $1 \leq q <+\infty$ and $0<r<+\infty$ and we define
\begin{align*}
|||f|||_{\widetilde{L}^{q,r}(G)} :=  \left(\int_0^\infty \left(t^{1/q} f^*(t)\right)^{r} \frac{dt}{t}\right)^{1/r}.
\end{align*}
\end{definition}
Then one has the following result on the equivalence of the quasi-norm on $\widetilde{L}^{q,r}(G)$ and the norm on $L^{q,r}(G)$ (and so in the sequel we drop the tilde):
\begin{proposition}
Let $1<q<+\infty$ and $1\leq r \leq +\infty$.  Then 
\begin{align*}
|||f|||_{\widetilde{L}^{q,r}(G)} \leq \|f\|_{L^{q,r}(G)}\leq q' |||f|||_{\widetilde{L}^{q,r}(G)}.
\end{align*}
\end{proposition}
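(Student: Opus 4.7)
The plan is to handle the two inequalities in turn. For the lower bound $|||f|||_{\widetilde{L}^{q,r}(G)} \leq \|f\|_{L^{q,r}(G)}$, I would simply observe that since $f^*$ is non-increasing, the averaging identity gives $f^*(t) \leq \frac{1}{t}\int_0^t f^*(s)\,ds = f^{**}(t)$ for every $t > 0$. Raising both sides to the $r$-th power, multiplying by $t^{r/q-1}$, and integrating over $(0,\infty)$ (or passing to a supremum when $r = +\infty$) then produces the desired bound.

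For the upper bound $\|f\|_{L^{q,r}(G)} \leq q'\,|||f|||_{\widetilde{L}^{q,r}(G)}$, I would rewrite
\[
t^{1/q} f^{**}(t) = t^{1/q-1}\int_0^t f^*(s)\,ds
\]
and appeal to the weighted Hardy inequality: for $\gamma > 0$, $1 \leq r < +\infty$, and $h \geq 0$,
\[
\left(\int_0^\infty \Bigl(t^{-\gamma}\int_0^t h(s)\,ds\Bigr)^r \frac{dt}{t}\right)^{1/r} \leq \frac{1}{\gamma}\left(\int_0^\infty \bigl(t^{1-\gamma} h(t)\bigr)^r \frac{dt}{t}\right)^{1/r}.
\]
Applied with $h = f^*$ and $\gamma = 1 - 1/q = 1/q'$, this produces precisely the constant $1/\gamma = q'$, and it is exactly the hypothesis $q > 1$ that guarantees $\gamma > 0$ so that Hardy's inequality is available. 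The case $r = +\infty$ is even more elementary: pulling $|||f|||_{\widetilde{L}^{q,\infty}(G)} = \sup_{s} s^{1/q} f^*(s)$ out of the average yields
\[
t^{1/q} f^{**}(t) \leq |||f|||_{\widetilde{L}^{q,\infty}(G)}\; t^{1/q-1}\int_0^t s^{-1/q}\,ds = q'\,|||f|||_{\widetilde{L}^{q,\infty}(G)},
\]
after which one takes the supremum in $t$.

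I do not anticipate any real obstacle: the decreasing rearrangement and the weighted Hardy inequality rely only on $(G,dx)$ being a $\sigma$-finite measure space, for which the bi-invariant Haar measure suffices, and no aspect of the stratified group structure enters. The only routine verification is the weighted Hardy inequality itself, which follows in one line from Minkowski's integral inequality after the substitution $s = ut$ inside the inner integral. It is worth noting in passing that the constant $q'$ degenerates as $q \to 1^+$, which is consistent with the well-known failure of this equivalence at the endpoint $q=1$ excluded from the statement.
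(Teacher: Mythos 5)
Your proof is correct, and it is essentially the paper's own argument: the paper simply defers to Lemma 2.2 of O'Neil and to Hunt's survey, and that lemma is precisely the weighted Hardy inequality with constant $q'$ that you invoke (your lower bound $f^*\leq f^{**}$ and the $r=+\infty$ computation are the standard accompanying steps). You have merely written out the details that the paper leaves to the references.
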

The proof for $1 \leq r<+\infty$ can be seen by an application of Lemma 2.2 in \cite{oneil}, while the case $r=+\infty$ is an exercise in calculus (see also \cite{Hunt}, equation (2.2) on p.~258).

It will be useful for our purposes to observe an alternative formulation of this equivalent quasi-norm in terms of the distribution function.  In particular, Proposition 1.4.9 in \cite{grafakos} implies the following.
\begin{proposition}
Let $1<q<+\infty$ and $0<r<+\infty$.  Then
\begin{align*}
|||f|||_{L^{q,r}(G)} \equiv q^{1/r} \left(\int_0^\infty \left(t |\{ |f|>t\}|^{1/q}\right)^{r} \frac{dt}{t}\right)^{1/r}.
\end{align*}
\end{proposition}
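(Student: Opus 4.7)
The plan is to prove this as a direct Fubini/layer cake computation relating the non-increasing rearrangement $f^*$ and the distribution function $m(f,\cdot)$. The key identity is the ``inverse'' relation
\[
\{(t,s)\in (0,\infty)^2 : s < f^*(t)\} = \{(t,s)\in (0,\infty)^2 : t < m(f,s)\},
\]
which follows directly from the definition of $f^*$ as the left-continuous inverse of $m(f,\cdot)$. Modulo a measure-zero set (coming from the possible plateaus of $m(f,\cdot)$), these two sets agree, and this is all that is needed for integration purposes.

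First, I would start from
\[
|||f|||_{L^{q,r}(G)}^r = \int_0^\infty t^{r/q} f^*(t)^r \frac{dt}{t},
\]
and use the layer cake representation $f^*(t)^r = r\int_0^\infty s^{r-1} \mathbf{1}_{\{s < f^*(t)\}}\, ds$ to rewrite the integrand.

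Then I would apply Fubini's theorem (all integrands are non-negative, so this is justified without any integrability hypotheses) and the set equality above to obtain
\begin{align*}
|||f|||_{L^{q,r}(G)}^r
&= r \int_0^\infty s^{r-1} \int_0^\infty t^{r/q - 1} \mathbf{1}_{\{t < m(f,s)\}}\, dt\, ds \\
&= r \int_0^\infty s^{r-1} \int_0^{m(f,s)} t^{r/q - 1}\, dt\, ds \\
&= \frac{rq}{r} \int_0^\infty s^{r-1} m(f,s)^{r/q}\, ds \\
&= q \int_0^\infty \bigl(s\, m(f,s)^{1/q}\bigr)^r \frac{ds}{s},
\end{align*}
and taking $r$-th roots yields the stated identity with equality (not merely equivalence), and with the explicit constant $q^{1/r}$.

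The only mildly delicate point is justifying the set identity $\{s < f^*(t)\} = \{t < m(f,s)\}$ up to a Lebesgue-null set in $(0,\infty)^2$, since both functions can have jumps and flats. This is standard for non-increasing left-continuous inverses: at any $(t,s)$ lying strictly inside either set, the other inequality holds as well, and the boundary consists of countably many horizontal/vertical segments from the jumps of $m(f,\cdot)$ and $f^*$, which have planar Lebesgue measure zero and thus do not affect the double integral. Apart from this bookkeeping, every step is an elementary manipulation, so there is no real obstacle; this is the reason the authors simply cite Proposition~1.4.9 in \cite{grafakos}.
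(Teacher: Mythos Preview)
Your argument is correct: the layer-cake representation $f^*(t)^r = r\int_0^\infty s^{r-1}\mathbf{1}_{\{s<f^*(t)\}}\,ds$, Fubini, the equimeasurability relation $\{s<f^*(t)\}=\{t<m(f,s)\}$ (up to a null set), and the explicit evaluation $\int_0^{m(f,s)} t^{r/q-1}\,dt=\tfrac{q}{r}m(f,s)^{r/q}$ combine exactly as you wrote to give the identity with the constant $q^{1/r}$. The paper does not supply its own proof but simply invokes Proposition~1.4.9 in \cite{grafakos}, whose proof is precisely this Fubini/layer-cake computation, so your approach coincides with the referenced one.
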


With either definition one can check the following scaling property that will be useful for our purposes (cf. Remark 1.4.7 in \cite{grafakos}):
\begin{align*}
|||\; |f|^\gamma |||_{L^{q,r}(G)} = |||f|||^\gamma_{L^{\gamma q, \gamma r}(G)}.
\end{align*}

With these definitions, we are now prepared to state H\"older's and Young's inequalities on the Lorentz scale.  
In particular on this scale one has a version of H\"older's inequality (Theorem 3.4 in \cite{oneil}).
\begin{theorem}\label{holder}
Let $f \in L^{q_1,r_1}(G)$ and $g \in L^{q_2,r_2}(G)$, where
\begin{align*}
\frac{1}{q_1}+\frac{1}{q_2}&=\frac{1}{q}<1\\
\frac{1}{r_1}+\frac{1}{r_2}&\geq  \frac{1}{r},
\end{align*}
for some $r \geq 1$.   Then
\begin{align*}
\|fg\|_{L^{q,r}(\mathbb{R}^d)} \leq q'\|f \|_{L^{q_1,r_1}(G)}\|g \|_{L^{q_2,r_2}(G)}
\end{align*}
\end{theorem}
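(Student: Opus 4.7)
The plan is to reduce the claim to the equivalent quasi-norm formulation given by the preceding proposition, derive from the definition of the non-increasing rearrangement a pointwise product inequality of the form $(fg)^*(s+t)\leq f^*(s)\,g^*(t)$, and then conclude by the classical H\"older inequality applied to integrals against the measure $du/u$ on $(0,\infty)$.

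First I would reduce to the case $\frac{1}{r_1}+\frac{1}{r_2}=\frac{1}{r}$. Setting $r_0:=\bigl(\tfrac{1}{r_1}+\tfrac{1}{r_2}\bigr)^{-1}$, the hypothesis gives $r_0\leq r$, and a direct estimate on the quasi-norm -- combining the pointwise bound $t^{1/q}f^*(t)\leq C(q,r_0)\,|||f|||_{L^{q,r_0}(G)}$ (which itself comes from the monotonicity of $f^*$ and the definition of the quasi-norm) with an interpolation in the exponent -- yields the elementary embedding $L^{q,r_0}(G)\hookrightarrow L^{q,r}(G)$. Applied to $fg$, this allows me to assume $r_0=r$ in the sequel.

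Next I would establish the key rearrangement inequality. If $|f(x)|\leq f^*(s)$ and $|g(x)|\leq g^*(t)$, then $|fg|(x)\leq f^*(s)g^*(t)$, hence
\begin{align*}
\{|fg|>f^*(s)g^*(t)\}\subset\{|f|>f^*(s)\}\cup\{|g|>g^*(t)\},
\end{align*}
and the right-hand side has measure at most $s+t$. By the definition of $(fg)^*$ this gives $(fg)^*(s+t)\leq f^*(s)g^*(t)$; specializing $s=t$ yields $(fg)^*(2u)\leq f^*(u)g^*(u)$. The computation is now short: by the change of variable $t=2u$,
\begin{align*}
|||fg|||_{L^{q,r}(G)}^r
&=\int_0^\infty\!\bigl[t^{1/q}(fg)^*(t)\bigr]^r\,\frac{dt}{t}
=2^{r/q}\int_0^\infty\!\bigl[u^{1/q}(fg)^*(2u)\bigr]^r\,\frac{du}{u}\\
&\leq 2^{r/q}\int_0^\infty\!\bigl[u^{1/q_1}f^*(u)\bigr]^r\bigl[u^{1/q_2}g^*(u)\bigr]^r\,\frac{du}{u}.
\end{align*}
Since $\frac{1}{r_1}+\frac{1}{r_2}=\frac{1}{r}$, the exponents $r_1/r$ and $r_2/r$ are H\"older-conjugate (both $\geq 1$), so the ordinary H\"older inequality on $((0,\infty),du/u)$ bounds the last integral by $|||f|||_{L^{q_1,r_1}(G)}^r\,|||g|||_{L^{q_2,r_2}(G)}^r$.

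Passing from quasi-norms back to norms via the preceding proposition then produces the claimed bound up to an absolute multiplicative constant of the form $2^{1/q}\,q'$. The main obstacle is therefore the constant: to obtain exactly the factor $q'$ stated in the theorem one must avoid the wasteful $2^{1/q}$ produced by specializing $s=t$ above. The standard fix is to work directly with the averaged rearrangement $(fg)^{**}$ and apply the sharper Hardy--Littlewood inequality $\int_0^t(fg)^*(s)\,ds\leq\int_0^t f^*(s)g^*(s)\,ds$, combined with a Hardy-type inequality in the outer variable $t$ together with the same application of H\"older on $((0,\infty),du/u)$; this is the content of O'Neil's argument in \cite{oneil}, and beyond this the only genuine work is careful bookkeeping.
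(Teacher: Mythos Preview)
The paper does not supply a proof of this statement: it is simply quoted as Theorem~3.4 of O'Neil \cite{oneil} in the preliminaries, so there is no ``paper's own proof'' to compare against beyond that citation. Your argument is a correct self-contained derivation---the rearrangement inequality $(fg)^*(s+t)\le f^*(s)g^*(t)$, the reduction to $1/r_1+1/r_2=1/r$ via the nesting $L^{q,r_0}\hookrightarrow L^{q,r}$, and the H\"older step on $((0,\infty),du/u)$ are all sound---and you correctly identify that the crude specialization $s=t$ costs a factor $2^{1/q}$; the refinement you sketch through $(fg)^{**}$ and the Hardy--Littlewood inequality is exactly O'Neil's route to the stated constant $q'$.
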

We also have the following generalization of Young's inequality (Theorem 3.1 in \cite{oneil}):
\begin{theorem}\label{young}
Let $f \in L^{q_1,r_1}(G)$ and $g \in L^{q_2,r_2}(\mathbb{R}^d)$, and suppose $1< q<+\infty$ and $1\leq r\leq +\infty$ satisfy
\begin{align*}
\frac{1}{q_1}+\frac{1}{q_2}-1&=\frac{1}{q}\\
\frac{1}{r_1}+\frac{1}{r_2}&\geq \frac{1}{r}.
\end{align*}
Then
\begin{align*}
\|f\ast g\|_{L^{q,r}(G)} \leq 3q \|f \|_{L^{q_1,r_1}(G)}\|g \|_{L^{q_2,r_2}(G)}.
\end{align*}
\end{theorem}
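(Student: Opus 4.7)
The plan is to follow O'Neil's original approach from \cite{oneil}: first establish a pointwise rearrangement inequality for convolutions, then reduce the Lorentz estimate to one-dimensional integral inequalities on the half-line. This argument uses only translation invariance of Haar measure and the basic properties of the non-increasing rearrangement, so it transfers directly from the Euclidean setting to the stratified group $G$.

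The key ingredient is the pointwise bound
\begin{align*}
(f \ast g)^{**}(t) \leq t\, f^{**}(t)\, g^{**}(t) + \int_{t}^{\infty} f^{*}(s)\, g^{*}(s)\, ds, \qquad t > 0,
\end{align*}
which I would prove by writing $(f \ast g)^{**}(t) = t^{-1} \sup_{|E| \leq t} \int_{E} |f \ast g|\, dx$, decomposing $f = f_{1} + f_{2}$ at height $f^{*}(t)$ (with $|f_{1}| \leq f^{*}(t)$ and $f_{2}$ supported where $|f| > f^{*}(t)$), and similarly $g = g_{1} + g_{2}$. The four convolutions $f_{i} \ast g_{j}$ are then estimated using only the elementary Young inequalities $L^{1} \ast L^{\infty} \to L^{\infty}$ and $L^{1} \ast L^{1} \to L^{1}$: the three crosses involving at least one bounded factor yield the $t\, f^{**}(t)\, g^{**}(t)$ term, while the tail $\times$ tail piece produces the integral $\int_{t}^{\infty} f^{*} g^{*}\, ds$ via the layer-cake representation of $f_{2}$ and $g_{2}$.

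Substituting this pointwise inequality into the definition of $\|\cdot\|_{L^{q,r}(G)}$ and applying the triangle inequality in $L^{r}((0,\infty), dt/t)$ splits the task into two integrals $I$ and $II$. For
\begin{align*}
I = \left(\int_{0}^{\infty} \bigl[t^{1+1/q} f^{**}(t) g^{**}(t)\bigr]^{r} \frac{dt}{t}\right)^{1/r},
\end{align*}
the scaling relation $1 + 1/q = 1/q_{1} + 1/q_{2}$ lets me factor $t^{1+1/q} = t^{1/q_{1}} \cdot t^{1/q_{2}}$ and apply Hölder on $(0,\infty, dt/t)$ in the case $1/r = 1/r_{1}+1/r_{2}$, obtaining $\|f\|_{L^{q_{1},r_{1}}} \|g\|_{L^{q_{2},r_{2}}}$ up to a multiplicative constant. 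The integral
\begin{align*}
II = \left(\int_{0}^{\infty} \left[t^{1/q}\int_{t}^{\infty} f^{*}(s) g^{*}(s)\, ds\right]^{r} \frac{dt}{t}\right)^{1/r}
\end{align*}
is handled by a Hardy-type estimate for the operator $H \mapsto \int_{t}^{\infty} H(s)\, ds$ with weight $t^{1/q}$, which holds precisely because $1/q > 0$; after Hardy the integrand becomes $s^{1/q_{1}+1/q_{2}} f^{*}(s) g^{*}(s)$, which is treated exactly as in $I$. The general case $1/r_{1}+1/r_{2} \geq 1/r$ then follows from the continuous embedding $L^{q,r_{0}}(G) \hookrightarrow L^{q,r}(G)$ for $r_{0} := r_{1}r_{2}/(r_{1}+r_{2}) \leq r$.

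The main obstacle is bookkeeping of constants: Hardy's inequality and Hölder each contribute explicit factors depending on $q$, and a further factor of $q'$ enters through the equivalence $\|\cdot\|_{L^{q,r}} \leq q'\, |||\cdot|||_{\widetilde{L}^{q,r}}$ stated earlier. Fitting all of these into the single constant $3q$ quoted in the theorem requires the sharp form of Hardy's inequality on the half-line with weight $t^{1/q}\, dt/t$, whose best constant is $q$. A secondary, more routine verification is that the pointwise rearrangement inequality depends only on the translation invariance of the bi-invariant Haar measure, and in particular does not use any commutativity of $G$, so that the Euclidean proof of \cite{oneil} genuinely carries over.
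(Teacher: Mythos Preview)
The paper does not give its own proof of this statement; Theorem~\ref{young} is quoted in the preliminaries as Theorem~3.1 of O'Neil \cite{oneil}, with no argument supplied. Your proposal is a faithful outline of O'Neil's original proof, and you correctly isolate the only point that genuinely needs checking when passing from $\mathbb{R}^d$ to $G$: the pointwise rearrangement inequality
\[
(f\ast g)^{**}(t)\;\le\; t\,f^{**}(t)\,g^{**}(t)\;+\;\int_t^\infty f^*(s)\,g^*(s)\,ds
\]
relies only on translation invariance of the (bi-invariant) Haar measure and on the elementary bounds $L^1\ast L^\infty\to L^\infty$ and $L^1\ast L^1\to L^1$, none of which require $G$ to be abelian.

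One small correction to your constant bookkeeping: the factor $q'$ from the equivalence $\|\cdot\|_{L^{q,r}}\le q'\,|||\cdot|||_{\widetilde L^{q,r}}$ does not enter here. The left-hand side $\|f\ast g\|_{L^{q,r}}$ is already defined via $(f\ast g)^{**}$, so no conversion is needed there; and after Hardy in the estimate of $II$ you land on $f^*$ and $g^*$, for which you only need the cost-free direction $|||\cdot|||_{\widetilde L^{q_i,r_i}}\le\|\cdot\|_{L^{q_i,r_i}}$. The constant $3q$ comes from combining the bound $I\le \|f\|_{L^{q_1,r_1}}\|g\|_{L^{q_2,r_2}}$ (Hölder, no constant) with $II\le q\,\|f\|_{L^{q_1,r_1}}\|g\|_{L^{q_2,r_2}}$ (Hardy with sharp constant $q$, then Hölder), together with O'Neil's slightly lossy way of combining the cross terms in the proof of the pointwise lemma.
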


Here we utilize certain estimates for functions of bounded variation
and sets of finite perimeter which continue to hold in the setting of
stratified groups (see \cite{GN}). 

We define the space of functions of bounded variation as
\begin{align*}
BV(G):= \left\{ u \in L^1(G) :  \sup_{\Phi\in\cB}  \int_{G} u  
\sum_{j=1}^n  X_j^* \Phi_j \;dx <+\infty \right\},
\end{align*}
where 
\begin{align*}
\cB=  \left\{\Phi \in C^1_c(G;\mathbb{R}^n), \; \| \Phi
   \|_{L^\infty(G;\mathbb{R}^n)} \leq 1\right\} ,
\end{align*} 
and $\{ X_1^*,\dots,X_n^*\}$ denotes the right invariant vector fields which agree with the fixed basis of $V_1$, $\{ X_1,\dots,X_n\}$, at zero.  Then, 
the above definition implies that $X_ju$ is a Radon measure with finite
total variation, for $j=1,\dots,n$.  This in turn is equivalent to the
fact that $|Xu|=\big(\sum_{j=1}^n (X_ju)^2 \big)^{1/2}$ is a Radon measure with finite
total variation:  
\begin{align*}
|Xu|(G)= \int_{G} d|Xu| <+\infty.
\end{align*}
We say that a set $E \subset G$ has finite perimeter if $|E|<+\infty$ and $\chi_E \in BV(G)$.  In particular, this implies that
\begin{align*}
|X\chi_E|(G)=  \sup_{\Phi\in\cB}   \int_{G} \chi_E 
\sum_{j=1}^n  
X_j^* \Phi_j \;dx <+\infty 
\end{align*}
For these functions, the coarea formula holds true  (see
\cite{GN} p.~1090):
 \begin{proposition}
 For $u \in BV(G)$, the set $\{u>t\}$ has finite perimeter for almost every $t \in \mathbb{R}$ and for every horizontal vector field
 $X$ we have 
\begin{align*}
|Xu|(G) &= \int_{-\infty}^\infty |X\chi_{\{u>t\}}|(G)\;dt
\end{align*}
\end{proposition}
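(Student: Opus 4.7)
The plan is to follow the classical Fleming--Rishel strategy, adapted to the stratified-group setting, by proving the two inequalities $|Xu|(G) \leq \int |X\chi_{\{u>t\}}|(G)\,dt$ and $|Xu|(G) \geq \int |X\chi_{\{u>t\}}|(G)\,dt$ separately. The ``finite perimeter for a.e.\ $t$'' conclusion will then drop out from the integrability of the integrand on the right-hand side.

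For the easy inequality, I would first reduce to $u \geq 0$ by writing $u = u^+ - u^-$ and treating each piece, then use the layer-cake identity $u(x) = \int_0^\infty \chi_{\{u>t\}}(x)\,dt$. For any admissible test field $\Phi \in \mathcal{B}$, Fubini yields
$$\int_G u \sum_{j=1}^n X_j^*\Phi_j\,dx = \int_0^\infty \int_G \chi_{\{u>t\}} \sum_{j=1}^n X_j^*\Phi_j\,dx\,dt \leq \int_0^\infty |X\chi_{\{u>t\}}|(G)\,dt,$$
and taking the supremum over $\Phi \in \mathcal{B}$ on the left finishes this direction.

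For the reverse inequality, my plan is threefold: (i) approximate $u \in BV(G)$ by smooth functions $u_\varepsilon := u \ast p_\varepsilon$ via the subLaplacian heat kernel; (ii) apply the smooth coarea identity to each $u_\varepsilon$; (iii) pass to the limit using lower semicontinuity of the horizontal perimeter and Fatou's lemma. For (i) the key property is \emph{strict} $BV$-convergence $\|Xu_\varepsilon\|_{L^1(G)} \to |Xu|(G)$, which is the Meyers--Serrin-type density of smooth functions in horizontal $BV$ on Carnot groups. For (ii), Sard's theorem together with the divergence theorem applied to horizontal fields on the smooth level sets $\{u_\varepsilon>t\}$ (which are regular for a.e.\ $t$) gives
$$\int_G |Xu_\varepsilon|\,dx = \int_{-\infty}^\infty |X\chi_{\{u_\varepsilon>t\}}|(G)\,dt.$$
For (iii), passing to a subsequence along which $u_\varepsilon \to u$ pointwise a.e., one obtains $\chi_{\{u_\varepsilon>t\}} \to \chi_{\{u>t\}}$ in $L^1_{\mathrm{loc}}$ for a.e.\ $t$, so the lower semicontinuity of $|X \cdot |(G)$ gives $|X\chi_{\{u>t\}}|(G) \leq \liminf_\varepsilon |X\chi_{\{u_\varepsilon>t\}}|(G)$, and Fatou combined with (ii) and the strict convergence from (i) yields the reverse inequality.

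The principal obstacle is step (i): proving strict convergence of the total variations under heat-kernel mollification. Weak-$*$ convergence of the vector-valued measures $Xu_\varepsilon \to Xu$ is routine, but strict convergence requires verifying the correct interplay between the left-invariant horizontal fields $X_j$ that generate the smooth gradient of $u_\varepsilon$ via $X_j(u \ast p_\varepsilon) = u \ast (X_j p_\varepsilon)$, and the right-invariant fields $X_j^*$ appearing in the duality definition of $|Xu|(G)$. This is exactly the Meyers--Serrin-type density theorem for horizontal $BV$ functions established in the Garofalo--Nhieu framework \cite{GN}; once it is in hand, steps (ii) and (iii) are standard smooth-category and lower-semicontinuity arguments, and the claim that $\{u>t\}$ has finite perimeter for a.e.\ $t$ follows because the integral $\int |X\chi_{\{u>t\}}|(G)\,dt$ equals the finite quantity $|Xu|(G)$.
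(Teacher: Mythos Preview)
The paper does not supply a proof of this proposition: it is stated as a known result with the citation ``see \cite{GN} p.~1090'' (Garofalo--Nhieu), so there is no in-paper argument to compare your proposal against.

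That said, your outline is the standard Fleming--Rishel strategy and is essentially the argument carried out in \cite{GN} itself, so in that sense your proposal matches the ultimate source. A couple of remarks on the details. First, the reduction to $u\ge 0$ via $u=u^+-u^-$ is unnecessary and slightly delicate (one has to justify $|Xu|(G)=|Xu^+|(G)+|Xu^-|(G)$); it is cleaner to use the signed layer-cake representation $u(x)=\int_0^\infty \chi_{\{u>t\}}(x)\,dt-\int_{-\infty}^0(1-\chi_{\{u>t\}}(x))\,dt$ directly, which works for any integrable $u$ and feeds straight into the duality argument over all of $\mathbb{R}$. Second, you have correctly isolated the only nontrivial ingredient: strict $BV$-approximation by smooth functions in the horizontal variation. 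Heat-kernel mollification $u\ast p_\varepsilon$ is not the most convenient choice here precisely because, with the paper's convolution convention, $X_j(u\ast p_\varepsilon)=u\ast X_jp_\varepsilon$ rather than $(X_ju)\ast p_\varepsilon$; the Garofalo--Nhieu argument instead uses group mollifiers constructed so that the horizontal derivatives pass through in the right way, and this is exactly the Meyers--Serrin-type theorem you cite. Once that is granted, your steps (ii) and (iii) are correct as stated.
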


\section{Pointwise Interpolation Estimates for the Riesz Potentials}\label{interpolationsection}
The goal of this section is establish an analogue of the pointwise interpolation inequality in the setting of stratified groups.  Denoting by $\widecheck h$ the function $\widecheck h(x) =
h(x^{-1})$, in particular we will show that
\begin{lemma}\label{interpolation}
Let $\alpha \in (0,1)$.  There exists a constant $C=C(\alpha)>0$ such that
\begin{align*}
|  \cI_\alpha   X_j u (x)| \leq C \left( \sup_{t>0} | X_j u \ast
  p_t(x)|\right)^{1-\alpha} \left(\sup_{t>0} |u \ast
  \sqrt{t}   (X_jp_t)\widecheck{\;}  |\right)^{\alpha} 
\end{align*}
for all $u \in C^\infty_c(G)$  and $X_j\in V_1$.  
\end{lemma}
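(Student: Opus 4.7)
The plan is to start from the subordination representation
\[
\mathcal{I}_\alpha X_j u(x) = \frac{1}{\Gamma(\alpha/2)} \int_0^\infty t^{\alpha/2-1}\, (X_j u)\ast p_t(x)\, dt,
\]
split the integral at a threshold $T>0$, bound the two resulting pieces by the two maximal functions appearing in the statement, and optimize in $T$. Set $M_1(x):=\sup_{t>0}|(X_ju)\ast p_t(x)|$ and $M_2(x):=\sup_{t>0}|u\ast \sqrt{t}\,(X_jp_t)\widecheck{\;}(x)|$.

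For the short-time part, simply bound the integrand pointwise by $M_1(x)$:
\[
\Big|\int_0^T t^{\alpha/2-1}(X_ju)\ast p_t(x)\,dt\Big|\leq C_\alpha\, T^{\alpha/2}\, M_1(x).
\]
For the long-time part the idea is to shift the derivative from $u$ onto $p_t$, thereby extracting a factor of $t^{-1/2}$ coming from the parabolic scaling of a single horizontal derivative of the heat kernel. Writing
$(X_ju)\ast p_t(x)=\int (X_ju)(z)\,p_t(z^{-1}x)\,dz$
and integrating by parts, using that left-invariant vector fields are divergence-free with respect to the bi-invariant Haar measure, one obtains $(X_ju)\ast p_t = u\ast X_j^* p_t$, where $X_j^*$ is the right-invariant vector field agreeing with $X_j$ at the identity. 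A direct computation from the definitions of left- and right-invariant differentiation and the inversion map gives the identity $X_j^*f=-(X_j\widecheck{f})\widecheck{\;}$. Since the heat kernel of the subLaplacian is symmetric, $\widecheck{p}_t=p_t$, this specializes to
\[
(X_ju)\ast p_t \;=\; -\,u\ast (X_j p_t)\widecheck{\;}.
\]
Inserting $\sqrt{t}/\sqrt{t}$ we get $|(X_ju)\ast p_t(x)|\leq t^{-1/2}M_2(x)$, hence
\[
\Big|\int_T^\infty t^{\alpha/2-1}(X_ju)\ast p_t(x)\,dt\Big|\leq C_\alpha M_2(x)\int_T^\infty t^{\alpha/2-3/2}\,dt = C_\alpha'\, T^{(\alpha-1)/2}\, M_2(x),
\]
and the convergence at infinity uses exactly $\alpha<1$.

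Adding the two estimates yields
$|\mathcal{I}_\alpha X_j u(x)|\leq C_\alpha\big(T^{\alpha/2}M_1(x)+T^{(\alpha-1)/2}M_2(x)\big)$,
and choosing $T^{1/2}\sim M_2(x)/M_1(x)$ balances the two terms and produces the desired bound by $M_1(x)^{1-\alpha}M_2(x)^\alpha$. The one genuine subtlety, and the main obstacle relative to the Euclidean case, is the algebraic identity $(X_ju)\ast p_t = -\,u\ast(X_jp_t)\widecheck{\;}$: on a noncommutative group one cannot simply commute the left-invariant derivative $X_j$ with the "right" convolution $f\ast g(x)=\int f(xy^{-1})g(y)\,dy$, and the reflected kernel $(X_jp_t)\widecheck{\;}$ in the statement is precisely what is forced by the interaction of $X_j$ with inversion on $G$, together with the symmetry of the heat kernel.
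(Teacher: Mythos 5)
Your proposal is correct and follows essentially the same route as the paper's proof: split the subordination integral at a threshold, bound the short-time piece by the heat maximal function of $X_ju$, use the identity $X_ju\ast p_t=-u\ast(X_jp_t)\widecheck{\;}$ (via $\widecheck{p}_t=p_t$) to bound the long-time piece, and optimize over the threshold. The only cosmetic difference is that the paper evaluates the two $t$-integrals via dyadic sums rather than directly, and you supply a slightly more detailed justification of the key convolution identity than the paper does.
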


Both of these maximal functions admit $L^p$ estimates for
$1<p<+\infty$, which can be found in the work of G. Folland and E. Stein \cite{Folland-Stein-2}, while the first of these maximal functions,
which is the usual maximal function associated with the heat kernel on
the group, has a weak-$(1,1)$ bound because we work on a 
stratified Lie group (see Theorem 4.1 in Cowling, Gaudry, Giulini, and Mauceri \cite{CGGM}).

\proof  
We have
\begin{align*}
 \cI_\alpha  X_j u
&= \frac{1}{\Gamma(\alpha/2)} \int_0^\infty t^{\alpha/2-1} X_ju \ast p_t \;dt \\
&=  \frac{1}{\Gamma(\alpha/2)} \int_0^r t^{\alpha/2-1} X_ju \ast p_t \;dt +  \frac{1}{\Gamma(\alpha/2)} \int_r^\infty t^{\alpha/2-1} X_ju \ast p_t \;dt \\
&=: I(r)+II(r).
\end{align*}
For $I(r)$, we have
\begin{align*}
|I(r)| &= \left|\frac{1}{\Gamma(\alpha/2)} \sum_{n=0}^\infty \int_{r2^{-n-1}}^{r2^{-n}} t^{\alpha/2-1} X_ju \ast p_t \;dt \right| \\
&\leq \frac{1}{\Gamma(\alpha/2)} \sum_{n=0}^\infty \left(r2^{-n-1}\right)^{\alpha/2}\sup_{t>0} | X_j u \ast p_t(x)| \int_{r2^{-n-1}}^{r2^{-n}} \frac{dt}{t} \\
&=  C_1  r^{\alpha/2} \sup_{t>0} | X_j u \ast p_t(x)|,
\end{align*}
where 
\begin{align*}
C_1&:= \frac{\ln(2)}{\Gamma(\alpha/2)} \sum_{n=0}^\infty \left(2^{-n-1}\right)^{\alpha/2}.
\end{align*}
Meanwhile for $II(r)$   we first observe that, 
$\widecheck p_t = p_t$ since $G$ is stratified (see
e.g. \cite{CGGM}). Then, using the identity $X_j u* p_t= -u*(X_j
p_t)\widecheck{\;}$,  we 
have
\begin{align*}
|II(r)| &= \left| \frac{1}{\Gamma(\alpha/2)} \int_r^\infty
  t^{\alpha/2-1} X_ju \ast   p_t   \;dt \right| \\
&\leq \frac{1}{\Gamma(\alpha/2)} \sum_{n=0}^\infty
\int_{r2^n}^{r2^{n+1}} t^{\alpha/2-3/2} |u \ast \sqrt{t} 
 (X_j p_t)\widecheck{\;}  | \;dt \\
&\leq \frac{1}{\Gamma(\alpha/2)}  \sup_{t>0} |u \ast \sqrt{t}
 (X_j
p_t)\widecheck{\;}  |\sum_{n=0}^\infty \left(r2^{n+1}\right)^{\alpha/2-1/2} \int_{r2^n}^{r2^{n+1}}  \;\frac{dt}{t} \\
&= C_2  r^{\alpha/2-1/2}  \sup_{t>0} |u \ast \sqrt{t}  (X_j
p_t)\widecheck{\;}   |
\end{align*}
where
\begin{align*}
C_2:= \frac{\ln(2)}{\Gamma(\alpha/2)} \sum_{n=0}^\infty \left(2^{n+1}\right)^{\alpha/2-1/2}.
\end{align*}

Combining these estimates we find
\begin{align*}
| \cI_\alpha  
X_ju| \leq C_1 r^{\alpha/2} \sup_{t>0} | X_j u \ast p_t(x)| + C_2
r^{\alpha/2-1/2}  \sup_{t>0} |u \ast \sqrt{t}
 (X_j
p_t)\widecheck{\;}  |,
\end{align*}
which with the choice of $r$ such that 
\begin{align*}
 C_1 r^{\alpha/2} \sup_{t>0} | X_j u \ast p_t(x)|=C_2  r^{\alpha/2-1/2}  \sup_{t>0} |u \ast
 \sqrt{t}
 (X_j
p_t)\widecheck{\;}  |
\end{align*}
yields
\begin{align*}
|   \cI_\alpha   
X_ju| \leq C_3 \left(\sup_{t>0} | X_j u \ast p_t(x)|\right)^{1-\alpha}
\left(\sup_{t>0} |u \ast \sqrt{t}  (X_j
p_t)\widecheck{\;}  | \right)^{\alpha}
\end{align*}
with
\begin{align*}
C_3:=2C_1^{1-\alpha} C_2^{\alpha}.    \qed   \bigskip  
\end{align*}

\medskip


\section{Proof of the Main Result}\label{mainresult}

\begin{proof}[Proof of Theorem \ref{thm1}]
We will prove the result for $\alpha \in (0,1)$, the case $\alpha \in [1,Q)$ then follows by R. O'Neil's convolution inequality and the semi-group property of the Riesz potentials (Theorem 3.15 on p.~182 in \cite{Folland}).

We observe that it suffices to prove the existence of a
constant $C>0$ such that, for each $j \in \{ 1,\ldots  n   \}$, one has the inequality
\begin{align}\label{sufficient}
\|  \cI_{\alpha}   X_j u \|_{L^{Q/(Q-\alpha),1}(G)} \leq C\| X u
\|_{L^1(G)} .
\end{align}
 for any vector field $X_j$ of the fixed basis of $V_1$, where
 $Xu$ denotes the horizontal gradient of  $u$.  Indeed, the computation in the proof of Theorem 4.10 on p.~190 of \cite{Folland} shows
\begin{align}\label{identity}
\mathcal{J}^{1/2}u = -\sum_{j=1}^n X_ju \ast \mathcal{J}(K_j \ast \cI_1),
\end{align}
where $K_j=-X^*_j\cI_2$.  Here we recall that kernels $\mathcal{J}(K_j \ast \cI_1)$ in the convolution
\begin{align}\label{operator}
g \mapsto g\ast \mathcal{J}(K_j \ast \cI_1) 
\end{align}
are of type zero.  In particular, the maps \eqref{operator} are bounded firstly on $L^p(G)$, as established in Theorem 4.9 on p.~189, and secondly on $L^{Q/(Q-\alpha),1}(G)$ by interpolation.  Therefore, if we assume the validity of \eqref{sufficient}, these observations and a summation in $j$ would enable us to conclude that
\begin{align*}
 \|  \cI_{\alpha} \mathcal{J}^{1/2} u 
\|_{L^{Q/(Q-\alpha),1}(G)} \leq C\| X u
\|_{L^1(G)}.
\end{align*}
However the conclusion of the theorem then follows by the choice of $u=\cI_1 f$, as one has the identity $\mathcal{J}^{1/2}\cI_1 f=f$.

Therefore we proceed to establish \eqref{sufficient}.  In analogy with the Euclidean case \cite{Spector}, we next argue that it suffices to prove the inequality
\eqref{sufficient} for all $u=\chi_E$ such that $\chi_E \in BV(G)$.  To this end, we first express $u$ as integration of its level sets to obtain the pointwise formula 
\begin{align*}
X_j u = \int_{-\infty}^\infty X_j \chi_{\{u>t\}}\;dt.
\end{align*}
Here we observe that an interchange of the order of integration with the convolution yields
 \begin{align*}
 \cI_{\alpha}X_j u = \int_{-\infty}^\infty  \cI_{\alpha}X_j \chi_{\{u>t\}}\;dt.
\end{align*}
Thus, Minkowski's inequality for integrals, the inequality \eqref{sufficient} applied to the set function $\chi_{\{u>t\}}$, and the coarea formula imply
\begin{align*}
\|  \cI_{\alpha}  X_j u \|_{L^{Q/(Q-\alpha),1}(G)} &\leq
\int_{-\infty}^\infty \|  \cI_{\alpha}  X_j \chi_{\{u>t\}} \|_{L^{Q/(Q-\alpha),1}(G)}\;dt \\
&\leq C\int_{-\infty}^\infty | X\chi_{\{u>t\}}|(G) \;dt \\
&=C \int_G |Xu| \; dx,
\end{align*}
which thus would yield the conclusion of the theorem.

We therefore finally proceed to establish \eqref{sufficient} for $u=\chi_E$.  We begin with the pointwise estimate from Lemma \ref{interpolation}, that if $u \in C^\infty_c(G)$ one has the inequality
\begin{align*}
| \cI_\alpha   X_j u (x)| 
\leq C \left( \sup_{t>0} | X_j u \ast p_t(x)|\right)^{1-\alpha}
\left(\sup_{t>0} |u \ast \sqrt{t}  (X_j
p_t)\widecheck{\;}  |\right)^{\alpha}.
\end{align*}
For convenience of display we contract these two maximal functions as 
\begin{align*}
\mathcal{M}(f) &:=\sup_{t>0} | f \ast p_t|\\
\mathcal{M}_1(f)&:=\sup_{t>0} |f \ast \sqrt{t}  (X_j
p_t)\widecheck{\;} |. 
\end{align*}
By an application of H\"older's inequality, we have
\begin{align*}
\|  \cI_{\alpha}  X_j u \|_{L^{Q/(Q-\alpha),1}(G)} 
\leq C\| (\mathcal{M}(X_ju))^{1-\alpha} \|_{L^{1/(1-\alpha),\infty}(G)}\| (\mathcal{M}_1(u))^{\alpha} \|_{L^{r,1}(G)}
\end{align*}
where $r$ satisfies
\begin{align*}
\frac{1}{Q/(Q-\alpha)} = \frac{1}{1/(1-\alpha)} + \frac{1}{r}.
\end{align*}
Let us observe that a change to the equivalent quasi-norms results in the inequalities
\begin{align*}
\| ((\mathcal{M}(X_ju))^{1-\alpha} \|_{L^{1/(1-\alpha),\infty}(G)} &\leq \frac{1}{\alpha} ||| ((\mathcal{M}(X_ju))^{1-\alpha} |||_{L^{1/(1-\alpha),\infty}(G)},\\
\| (\mathcal{M}_1(u))^{\alpha} \|_{L^{r,1}(G)} &\leq  r' ||| (\mathcal{M}_1(u))^{\alpha} |||_{L^{r,1}(G)}.
\end{align*}
Meanwhile, for these quasi-norms one has the scaling
\begin{align*}
||| ((\mathcal{M}(X_ju))^{1-\alpha} |||_{L^{1/(1-\alpha),\infty}(G)} &= ||| (\mathcal{M}(X_ju) |||^{1-\alpha}_{L^{1,\infty}(G)},\\
||| (\mathcal{M}_1(u))^{\alpha} |||_{L^{r,1}(G)} &= |||\mathcal{M}_1(u) |||^\alpha_{L^{r\alpha,\alpha}(G)}.
\end{align*}
We now recall the boundedness of these two maximal functions, that one has
\begin{align*}
 ||| (\mathcal{M}(X_ju) |||_{L^{1,\infty}(G)} &\leq C_1\| X_j u \|_{L^{1}(G)}, \\
  ||| \mathcal{M}_1(u) |||_{L^{r\alpha,\alpha}(G)} &\leq  C_2||| u |||_{L^{r\alpha,\alpha}(G)}.
\end{align*}
The former follows from Theorem 4.1 in the paper of Cowling, Gaudry, Giulini, and Mauceri \cite{CGGM}, while the latter follows from the boundedness of this maximal function on $L^p$ and interpolation (see Grafakos \cite{grafakos}, p.~56, Theorem 1.4.19).  Thus we deduce that
\begin{align*}
\|  \cI_{\alpha}  X_j u \|_{L^{Q/(Q-\alpha),1}(G)} \leq C'\| X_j u \|_{L^{1}(G)}^{1-\alpha} ||| u |||^\alpha_{L^{r\alpha,\alpha}(G)}.
\end{align*}
From the preceding inequality we may pass to functions for which $X_ju$ is a measure by density, and so we take $u=\chi_E$ and make use of the fact that
\begin{align*}
||| \chi_E |||^\alpha_{L^{r\alpha,\alpha}(G)} = C(\alpha) |E|^{\alpha(1-1/Q)}
\end{align*}
to deduce that
\begin{align*}
\|  \cI_{\alpha}  X_j \chi_E \|_{L^{Q/(Q-\alpha),1}(G)} \leq C''| X_j \chi_E|(G)^{1-\alpha} |E|^{\alpha(1-1/Q)}.
\end{align*}
Next, when we take into account the isoperimetric inequality
\begin{align*}
 |E|^{1-1/Q} \leq \tilde{C}| X \chi_E|(G),
\end{align*}
we see that we have established the inequality
\begin{align*}
\|  \cI_{\alpha}  X_j \chi_E \|_{L^{Q/(Q-\alpha),1}(G)} \leq C'''| X \chi_E|(G),
\end{align*}
which proves the claim and hence the result is demonstrated. 
\end{proof}

\section{Final remarks}

In this work we began the investigation of sharp $L^1$ 
embedding in the subelliptic setting of a subLaplacian on a stratified Lie group.  Of course, it is natural to expect the same result to hold true on a Lie group of polynomial growth.  In this case the Haar measure is still doubling and the results in \cite{CGGM} should extend
to this situation.  More challenging would be the case of a
subLaplacian on a general Lie group, a case recently studied by the second author, see \cite{PV} and \cite{BPTV}.  


\section*{Acknowledgements}
 The groundwork for this paper was laid while the second and third named authors were visiting the Department of Mathematics at Washington University in St. Louis. We wish to extend our gratitude to this institution for the hospitality and the pleasant and stimulating working environment provided. The authors would like to thank Gerald Folland for discussions regarding stratified groups and maximal function bounds in this setting.  The second author is supported in part by the 2015 PRIN grant \emph{Real and Complex Manifolds: Geometry, Topology and Harmonic Analysis}.  The third author is supported in part by the Taiwan Ministry of Science and Technology under research grants 107-2918-I-009-003 and 107-2115-M-009-002-MY2.

\begin{bibdiv}

\begin{biblist}

\bib{BPTV}{article}{
author = {Bruno, Tommaso},
AUTHOR = {Peloso, Marco M.},
author = {Tabacco, Anita},
author = {Vallarino, Maria},
     TITLE = {Sobolev spaces on Lie groups: Embedding theorems and algebra properties},
   JOURNAL = {Jour. Funct. Anal.},
  FJOURNAL = {Journal of Functional Analysis},
    VOLUME = {276},
      YEAR = {2019},
     PAGES = {3014--3050},
}

\bib{CVS}{article}{
   author={Chanillo, Sagun},
   author={Van Schaftingen, Jean},
   title={Subelliptic Bourgain-Brezis estimates on groups},
   journal={Math. Res. Lett.},
   volume={16},
   date={2009},
   number={3},
   pages={487--501},
}

\bib{CRT-N}{article}{
author = {Coulhon, Thierry}, 
author = {Russ, Emmanuel}, 
author = {Tardivel-Nachef, Val\'erie},
title={Sobolev algebras on Lie groups and Riemannian manifolds},
journal= {American Journal of Mathematics}, 
Volume = {123}, 
date= {2001},
Number = {2} 
pages = {283--342},
}

\bib{CGGM}{article}{
   author={Cowling, Michael},
   author={Gaudry, Garth},
   author={Giulini, Saverio},
   author={Mauceri, Giancarlo},
   title={Weak type $(1,1)$ estimates for heat kernel maximal functions on
   Lie groups},
   journal={Trans. Amer. Math. Soc.},
   volume={323},
   date={1991},
   number={2},
   pages={637--649},
}

\bib{Folland}{article}{
   author={Folland, G. B.},
   title={Subelliptic estimates and function spaces on nilpotent Lie groups},
   journal={Ark. Mat.},
   volume={13},
   date={1975},
   number={2},
   pages={161--207},
}

\bib{Folland-Stein}{article}{
   author={Folland, G. B.},
   author={Stein, E. M.},
   title={Estimates for the $\bar \partial _{b}$ complex and analysis on
   the Heisenberg group},
   journal={Comm. Pure Appl. Math.},
   volume={27},
   date={1974},
   pages={429--522},
}

\bib{Folland-Stein-2}{book}{
   author={Folland, G. B.},
   author={Stein, Elias M.},
   title={Hardy spaces on homogeneous groups},
   series={Mathematical Notes},
   volume={28},
   publisher={Princeton University Press, Princeton, N.J.; University of
   Tokyo Press, Tokyo},
   date={1982},
   pages={xii+285},
}

\bib{GN}{article}{
   author={Garofalo, Nicola},
   author={Nhieu, Duy-Minh},
   title={Isoperimetric and Sobolev inequalities for Carnot-Carath\'{e}odory
   spaces and the existence of minimal surfaces},
   journal={Comm. Pure Appl. Math.},
   volume={49},
   date={1996},
   number={10},
   pages={1081--1144},
}

\bib{grafakos}{book}{
   author={Grafakos, Loukas},
   title={Classical Fourier analysis},
   series={Graduate Texts in Mathematics},
   volume={249},
   edition={3},
   publisher={Springer, New York},
   date={2014},
   pages={xviii+638},
}
\bib{Hunt}{article}{
   author={Hunt, Richard A.},
   title={On $L(p,\,q)$ spaces},
   journal={Enseignement Math. (2)},
   volume={12},
   date={1966},
   pages={249--276},
}

\bib{Krantz}{article}{
   author={Krantz, Steven G.},
   title={Analysis on the Heisenberg group and estimates for functions in
   Hardy classes of several complex variables},
   journal={Math. Ann.},
   volume={244},
   date={1979},
   number={3},
   pages={243--262},
}

\bib{Mazya-Shaposhnikova}{article}{
   author={Maz'ya, Vladimir},
   author={Shaposhnikova, Tatyana},
   title={Pointwise interpolation inequalities for Riesz and Bessel potentials},
   series={Analytical and Computational Methods in Scattering and Applied Mathematics},
   volume={417},
   date={2000},
   publisher={Chapman & Hall/CRC, Boca Raton, FL},
}

\bib{oneil}{article}{
   author={O'Neil, Richard},
   title={Convolution operators and $L(p,\,q)$ spaces},
   journal={Duke Math. J.},
   volume={30},
   date={1963},
   pages={129--142},
}

\bib{PV}{article}{
AUTHOR = {Peloso, Marco M.},
author = {Vallarino, Maria},
     TITLE = {Sobolev algebras on nonunimodular {L}ie groups},
   JOURNAL = {Calc. Var. Partial Differential Equations},
  FJOURNAL = {Calculus of Variations and Partial Differential Equations},
    VOLUME = {57},
      YEAR = {2018},
    NUMBER = {6},
     PAGES = {Art. 150, 34},
}

\bib{SSVS}{article}{
   author={Schikorra, Armin},
   author={Spector, Daniel},
   author={Van Schaftingen, Jean},
   title={An $L^1$-type estimate for Riesz potentials},
   journal={Rev. Mat. Iberoam.},
   volume={33},
   date={2017},
   number={1},
   pages={291--303},
}

\bib{sobolev}{article}{
   author={Sobolev, S.L.},
    title={On a theorem of functional analysis},
   journal={Mat. Sb.},
   volume={4},
   number={46},
  year={1938},
  language={Russian},
   pages={471--497},
   translation={
      journal={Transl. Amer. Math. Soc.},
      volume={34},
     date={},
      pages={39-68},
   },
   }
   
   \bib{Spector}{article}{
   author={Spector, Daniel},
   title={An Optimal Sobolev Embedding for $L^1$},
   journal={arXiv:1806.07588v2},
   volume={},
   date={},
   number={},
   pages={},
}

\bib{Spector-VanSchaftingen-2018}{article}{
   author={Spector, Daniel},
   author={Van Schaftingen, Jean},
   title={Optimal embeddings into Lorentz spaces for some vector differential operators via Gagliardo's lemma},
   journal={arXiv:1811.02691 },
   volume={},
   date={2018},
   number={},
   pages={},
}

\bib{Stein-Weiss}{article}{
   author={Stein, Elias M.},
   author={Weiss, Guido},
   title={On the theory of harmonic functions of several variables. I. The
   theory of $H^{p}$-spaces},
   journal={Acta Math.},
   volume={103},
   date={1960},
   pages={25--62},
   issn={0001-5962},
}

\bib{Zygmund}{article}{
   author={Zygmund, A.},
   title={On a theorem of Marcinkiewicz concerning interpolation of
   operations},
   journal={J. Math. Pures Appl. (9)},
   volume={35},
   date={1956},
   pages={223--248},
}

\end{biblist}
	
\end{bibdiv}
\end{document}